\documentclass{article}
\usepackage[english]{babel}
\usepackage{amsfonts, amsmath, amsthm, amssymb,amscd,indentfirst}
\usepackage{amsmath,amssymb,latexsym,indentfirst}
\usepackage{times}
\usepackage{palatino}
%\pagestyle{headings}
%\pagenumbering{Roman}

\newtheorem{theorem}{Theorem}[section]

\newtheorem{proposition}{Proposition}[section]

\newtheorem{definition}{Definition}[section]

\newtheorem{example}{Example}[section]
\newtheorem{remark}{Remark}[section]

\def\Cc{{\cal C}}

\def\Cc{{\cal C}}

\def\e{{\rm e}}

\def\b{\beta}

\def\oveh{{\overline\eh}}
\def\eh{{{\bf{\rm{e}}}}}

\def\Kc{{\cal K}}
\def\a{{\alpha}}

\def\g{{\gamma}}

\def\Nc{{\cal N}}

\def\Wc{{\cal W}}

\def\Hc{\mathfrak{r}}

\def\Cc{{\cal C}}

\def\eh{{{\bf{\rm{e}}}}}

\def\a{{\alpha}}

\def\g{{\gamma}}

\def\Wc{{\cal W}}

\def\Hc{\mathfrak{r}}

%jorge

\def\Cc{\mathcal{C}}

\def\eh{{{\bf{\rm{e}}}}}

\def\a{{\alpha}}

\def\g{{\gamma}}

\def\Hc{{\cal H}}

\begin{document}

\title{Positive mass and Penrose type inequalities for asymptotically hyperbolic hypersurfaces}

\author{Levi Lopes de Lima\thanks{Federal University of Cear\'a,
Department of Mathematics, Campus do Pici, R. Humberto Monte, s/n, 60455-760,
Fortaleza/CE, Brazil. Partially supported by CNPq/Brazil and FUNCAP/CE.}
\and Frederico Gir\~ao\thanks{Federal University of Cear\'a,
Department of Mathematics, Campus do Pici, R. Humberto Monte, s/n, 60455-760,
Fortaleza/CE, Brazil.}}

\maketitle

\begin{abstract}
We establish versions of the Positive Mass and Penrose inequalities  for a class of asymptotically hyperbolic hypersurfaces. In particular, under the usual dominant energy condition, we prove in all dimensions $n\geq 3$ an optimal Penrose inequality for certain graphs in hyperbolic space $\mathbb H^{n+1}$ whose boundary has constant mean curvature  $n-1$.
\end{abstract}

\section{Introduction}\label{intro}

As  a result of investigations on its Hamiltonian formulation, General Relativity has provided Riemannian Geometry with a notion of mass, denoted by $\mathfrak m_{(X,g)}$, which is an  invariant defined in terms of  the asymptotic behavior of a noncompact Riemanniann manifold $(M,g)$ arising as  a (time-symmetric) initial data set. Roug\-hly speaking, it is  assumed that $(M,g)$ converges at infinity to some model geometry $(N,g_0)$ and the invariant is engineered so as to somehow  measure the corresponding rate of convergence. In particular, the important question arises as to whether, under a suitable dominant energy condition, the invariant in question satisfies the inequality
\begin{equation}\label{pmineq}
\mathfrak m_{(M,g)}\geq 0,
\end{equation}
with equality taking place if and only if $(M,g)=(N,g_0)$ isometrically.

The classical example is the asymptotically flat case, where the model geometry at infinity is Euclidean. Here, $\mathfrak m_{(M,g)}$ is the so-called {\em ADM mass} and the famous Positive Mass Conjecture (PMC) says that $\mathfrak m_{(M,g)}\geq 0$  if one assumes that $R_g$, the scalar curvature of $(M,g)$, is non-negative, with the equality taking place if and only if $(M,g)$ is isometric to Euclidean space. This has been proved by Schoen-Yau \cite{SY} if $n\leq 7$ and by Witten \cite{W} in the spin case. Furthermore, if $(M,g)$ carries a compact inner boundary $\Gamma$, the so-called Penrose Conjecture (PC) improves the PMC by stating that
\begin{equation}\label{penrose}
\mathfrak m_{(M,g)}\geq \frac{1}{2}
\left(
\frac{A}{\omega_{n-1}}
\right)^{\frac{n-2}{n-1}},
\end{equation}
where $A$ is the area of $\Gamma$ and $\omega_{n-1}$ is the area of the unit $(n-1)$-sphere, with equality holding if and only if $(M,g)$ is the Schwarzschild solution. Here it is  assumed that $\Gamma$ is a (possibly disconnected) outermost minimal hypersurface that corresponds to the horizon of a collection of black holes inside $M$. If $n=3$ the PC has been verified for $\Gamma$ connected by Huisken-Ilmanen \cite{HI} and in general by Bray \cite{Br}. More recently, Bray and Lee \cite{BL} established the conjecture for $n\leq 7$ with the extra requirement that $M$ is spin for the rigidity statement.  Even though many partial results have been obtained \cite{BI} \cite{H1} \cite{Sc} \cite{FS} \cite{J}, the conjecture remains wide open in higher dimensions except for a recent breakthrough for Euclidean graphs by Lam \cite{L1} \cite{L2}. Inspired by his technique, the authors \cite{dLG} were
able to establish Penrose type inequalities for the ADM mass of a large class of asymptotically flat hypersurfaces in certain Riemannian manifolds with a warped product structure at \lq spatial\rq\, infinity.
In particular, Lam's result was extended to Euclidean  quasi-graphs.
One of the purposes of the present note is precisely to indicate how the methods introduced in \cite{dLG} can be adapted to the setting of asymptotically hyperbolic hypersurfaces.

In recent years, motivated by a renewed interest in negative cosmological constant solutions of Einstein field equations  in connection with the celebrated AdS/CFT correspondence, there has been much work toward  defining similar invariants for complete non-compact Riemannian manifolds whose geometry at infinity asymptotes some model geometry  other than the Euclidean one. A notable example occurs in case
the model is (locally) hyperbolic; see for instance \cite{CH}, \cite{CN}, \cite{H2} and \cite{M}. Here, the situation is a bit more complicated because in general the naturally defined invariant is not a number but instead a linear functional on a certain finite dimensional space of functions on the model. In some cases, however, it is possible to extract a mass-like invariant (i.e. a real number) out of the functional, so it makes sense  to ask whether inequalities similar to (\ref{pmineq}) and (\ref{penrose}) hold, with the corresponding rigidity statement.

Starting with the seminal work of Min-Oh \cite{Mi}, much effort has been made toward understanding the case in which the geometry at infinity is (locally) hyperbolic, with various positive mass  inequalities and rigidity results being proved under natural geometric assumptions; see for instance \cite{AD}, \cite{Wa}, \cite{CH} and \cite{ACG}. On the other hand, progress in the presence of an inner boundary $\Gamma$ is apparently much harder to obtain and the only results known to the authors are the contribution in \cite{CH} to the effect that in the spin case the mass is strictly positive (with no explicit bound) if $\Gamma$ has mean curvature at most $n-1$ and the recent preprint by Dahl-Gicquaud-Sakovich \cite{DGS}, where by using the ideas first presented in \cite{L1}, the authors establish suboptimal Penrose type inequalities  for certain hyperbolic graphs in the case $\Gamma$ is minimal. In this paper we adapt the method introduced in \cite{dLG} to establish positive mass and Penrose type inequalities for a large class of asymptotically hyperbolic hypersurfaces (Definition \ref{asympdef}). We now briefly describe the results and postpone a detailed presentation to  Subsection \ref{descr}. Recall that the main ingredient in \cite{dLG} is a flux-type formula that goes back to Reilly \cite{R} and has been developed along the years by several authors \cite{ARS} \cite{Ro} \cite{ABC} \cite{dL} \cite{AdLM}. The identity says roughly that the (extrinsic) scalar curvature of a hypersurface in an Einstein manifold endowed with a Killing field is, up to a multiplicative angle factor, the divergence of the vector field given by its Newton tensor applied to the tangential component of the Killing field. Assuming that the hypersurface is asymptotically flat in a suitable sense, integration of the identity over larger and larger domains reveals that the total flow of the vector field over the sphere at infinity equals the ADM mass of the hypersurface, and since this is also given by a bulk integral involving the scalar curvature, we were able to draw many interesting positive mass and Penrose like inequalities. As explained in Sections \ref{geo} and \ref{penroptimal}, the same principle works in the asymptotically hyperbolic case, which first gives a general mass formula (Theorem \ref{another2}) and then a positive mass inequality (Theorem \ref{another3}). Moreover, in the case of \lq balanced\rq\, graphs in hyperbolic space $\mathbb H^{n+1}$ carrying a horizon with constant mean curvature $n-1$, an extra argument as in \cite{dLG} is carried out to give an {\em optimal} Penrose inequality in {\em all} dimensions (Theorem \ref{penroptimal0}). We stress that no such optimal Penrose inequality seems to be available in the literature. In particular, up to the corresponding rigidity statement, this settles, for this class of manifolds,  a well-know conjecture.

We remark that the optimal Penrose inequality follows from a general formula for the mass of asymptotically hyperbolic hypersurfaces (not necessarily graphs); see Remark \ref{last}. Also, explicit mass formulae are also obtained for other kinds of hypersurfaces with  constant mean curvature horizons; see Remarks \ref{really} e \ref{finallyy}.

\vspace{0.3cm}

\noindent
{{\bf Acknowledgements:} The authors would like to thank Fernando Marques for many enlightening conversations during the preparation of this paper.}

\section{Preliminaries and statement of the results}\label{preli}

In this section we collect  the basic facts on  mass-like invariants of asymptotically hyperbolic manifolds and state our mains results (Theorems \ref{another2}, \ref{another3} and \ref{penroptimal0} below). Standard references for the material covered here are \cite{CN}, \cite{CH}, \cite{H2} and \cite{M}.

\subsection{Mass-like invariants for asymptotically hyperbolic manifolds: a review}\label{review}

We start by describing the model geometries at infinity. Thus, let us fix once and for all a closed $(n-1)$-dimensional manifold $(E,h)$. We assume that the scalar curvature $R_h$ of $(E,h)$ is constant, which we normalize so that  $R_h=(n-1)(n-2)\epsilon$, with $\epsilon=0,\pm 1$. On the product manifold $F=E\times (r_0,+\infty)$, $r_0>1$, we consider the metric
\begin{equation}\label{metric}
b_{\epsilon}=\frac{dr^2}{\rho(r)^2}+r^2h, \quad \rho(r)=\sqrt{r^2+\epsilon},
\end{equation}
where $r$ is the standard linear coordinate in $(r_0,+\infty)$.
We remark that $(F,b_{\epsilon})$ has constant scalar curvature, namely, $R_{b_{\epsilon}}=-n(n-1)$. Moreover, $b_{\epsilon}$ is Einstein if and only if $h$ is.
We also fix a (local) orthonormal frame $\{\mathfrak e_{a}\}_{a=1}^{n-1}$ in $(E,h)$, so that $\{\tilde\eh_{\alpha}\}_{\alpha=1}^n$ given by
\begin{equation}\label{frame}
\tilde\eh_a=r^{-1}\mathfrak e_a,\quad \tilde\eh_n=\rho\frac{\partial}{\partial r},
\end{equation}
is a (local) orthonormal frame in $F$.

Roughly speaking, a Riemannian manifold is asymptotically hyperbolic if its metrics approaches the model $(F,b_\epsilon)$  in a suitable manner as one goes to infinity. The following definition formalizes this idea.

\begin{definition}\label{be}\cite{CN}\cite{CH}
A complete $n$-dimensional manifold $(M,g)$ is said to be {\em asymptotically hyperbolic (AH)} if there exists a compact set $K\subset M$ and a diffeomorphism $\Psi:M-K\to  F$ such that
\begin{equation}\label{chart}
\sum_{\alpha\beta}|(\Psi_*g)_{\alpha\beta}-\delta_{\alpha\beta}|
+\sum_{\alpha\beta\gamma}|\tilde\eh_\gamma((\Psi_*g)_{\alpha\beta})|=O(r^{-\tau}),
\end{equation}
for some $\tau>n/2$. Here, $(\Psi_*g)_{\alpha\beta}$ are the coefficients of the pushed forward metric $\Psi_*g$ with respect to the frame (\ref{frame}).
\end{definition}

Note that the definition is {\em chart-dependend} in principle, so that some further work is required to justify it.
Thus assume that one has {\em two} charts $\Psi_1$ and $\Psi_2$, both satisfying (\ref{chart}).  It is clear then that
$
\Psi_{12}=\Psi_2\circ\Psi_1^{-1}
$
satisfies
\begin{equation}\label{transf3}
\Psi_{12}^*b_{\epsilon}=b_{\epsilon}+O(r^{-\tau}).
\end{equation}
The following result shows that the hyperbolic structure at infinity in Definition \ref{be} is well defined as it does not depend on the chart used to express it.

\begin{proposition}\label{equiv}\cite{CN}\cite{CH}
If $\Phi:F\to F$ is a diffeomorphism satisfying
\begin{equation}\label{transf4}
\Phi^*b_{\epsilon}=b_{\epsilon}+O(r^{-\tau})
\end{equation}
then there
exists an isometry $A$ of $(F,b_\epsilon)$, possibly defined only for $r$ large, so that
\begin{equation}\label{isom}
\Phi=A+O(r^{-\tau}),
\end{equation}
with a corresponding assertion for the first and second order derivatives.
\end{proposition}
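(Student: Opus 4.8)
The plan is to exploit the rigidity of the hyperbolic model $(F,b_\epsilon)$ at infinity, following the now-standard argument for uniqueness of asymptotically hyperbolic structures (as in Chru\'sciel--Nagy and Chru\'sciel--Herzlich). First I would write the diffeomorphism $\Phi$ in the product coordinates $(x,r)\in E\times(r_0,\infty)$ as $\Phi(x,r)=(\phi(x,r),s(x,r))$ and feed this into the asymptotic condition (\ref{transf4}). Pulling back $b_\epsilon=\rho(r)^{-2}dr^2+r^2h$ and matching the leading-order terms of each block of the metric against $b_\epsilon$ itself yields a hierarchy of constraints: the $dr^2$-coefficient forces $ds/\rho(s)$ to agree with $dr/\rho(r)$ to leading order, i.e. $s(x,r)=r+O(r^{1-\tau})$ up to a multiplicative constant absorbed later, and that $s$ is asymptotically independent of $x$; the cross terms $dr\,dx^a$ must vanish to the appropriate order, which says $\phi$ is asymptotically $r$-independent; and the $h$-block forces the limiting map $x\mapsto\phi_\infty(x)$ to be a conformal diffeomorphism of $(E,h)$ — in fact, when combined with the normalization of $R_h$, an isometry of $(E,h)$ when $\epsilon\neq 0$, and a conformal/homothetic map intertwined with a dilation in $r$ when $\epsilon=0$. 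The upshot is that the leading term of $\Phi$ is itself an isometry $A$ of $(F,b_\epsilon)$ — these isometries are exactly the lifts of isometries of $(E,h)$ together with (in the $\epsilon=0$ case) the extra scaling symmetry $r\mapsto cr$ — defined at least for $r$ large.

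Having produced the candidate isometry $A$, I would then set $\Psi=A^{-1}\circ\Phi$, which by construction satisfies $\Psi^*b_\epsilon=b_\epsilon+O(r^{-\tau})$ and is now a small perturbation of the identity. The remaining task is a bootstrap: show that a diffeomorphism of $F$ that is $\mathrm{id}+o(1)$ and preserves $b_\epsilon$ to order $O(r^{-\tau})$ must in fact equal $\mathrm{id}+O(r^{-\tau})$, with the analogous gain for derivatives. This is where one linearizes: writing $\Psi=\exp_{b_\epsilon}(V)$ for a vector field $V\to 0$, the condition on $\Psi^*b_\epsilon$ becomes, to leading order, a bound on the symmetrized covariant derivative $\mathcal L_V b_\epsilon$, i.e. $|\nabla V|_{b_\epsilon}^{\mathrm{sym}}=O(r^{-\tau})$. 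One then integrates this Killing-type inequality along the radial geodesics of $b_\epsilon$, using that the space of genuine Killing fields of $(F,b_\epsilon)$ is finite dimensional and that any non-decaying part of $V$ would have to be (asymptotic to) a Killing field — which has already been removed by the choice of $A$. This forces $V=O(r^{-\tau})$. The derivative estimates follow either by differentiating the structure equations and repeating the argument, or by interior elliptic/ODE estimates applied to the components of $V$ in the frame (\ref{frame}).

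The main obstacle, and the step that needs the most care, is the second one: controlling the error terms in the ODE integration uniformly in $x\in E$ and showing that no slowly-decaying ``residual Killing field'' survives. Concretely, one must verify that the only solutions of the homogeneous equation $\mathcal L_V b_\epsilon=0$ that are $o(1)$ at infinity are identically zero — otherwise the decomposition ``isometry plus $O(r^{-\tau})$'' could fail — and one must check that the weight $\tau>n/2$ is compatible with the indicial roots of the relevant radial operator so that the inhomogeneous solution genuinely inherits the $O(r^{-\tau})$ decay of the source rather than picking up a slower homogeneous mode. This is precisely the place where the hypothesis $\tau>n/2$ (which also guarantees well-definedness of the mass functional reviewed above) is used. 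I would treat the three cases $\epsilon=1,0,-1$ in parallel, noting only that for $\epsilon=0$ the isometry group of $(F,b_\epsilon)$ is larger (it includes the dilations $r\mapsto cr$), so the candidate $A$ may involve a nontrivial scaling factor, while for $\epsilon=\pm1$ the radial coordinate is asymptotically pinned and $A$ comes purely from $\mathrm{Isom}(E,h)$; in all cases the finite dimensionality of the isometry group is what makes the ``subtract off the leading isometry'' step well posed.
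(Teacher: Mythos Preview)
The paper gives no proof of this proposition; it is simply quoted from \cite{CN} and \cite{CH} and used as a black box. There is therefore nothing in the paper to compare your argument against.

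That said, your outline is faithful to the strategy in those references: read off the leading-order behavior of $\Phi$ by matching the $dr^2$, cross, and $h$-blocks of the pulled-back metric; identify that leading part with an isometry $A$ of $(F,b_\epsilon)$ (isometries of $(E,h)$ lifted radially, plus the dilation in the $\epsilon=0$ case); and then bootstrap $A^{-1}\circ\Phi$ to $\mathrm{id}+O(r^{-\tau})$ by linearizing to a Killing-type equation and integrating radially, using that any non-decaying solution of the homogeneous equation is a genuine Killing field and has already been absorbed into $A$. As a high-level sketch this is sound.

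One point deserves correction. You attribute the success of the bootstrap to the threshold $\tau>n/2$ and frame it as an indicial-root compatibility condition. In the cited works the structure-at-infinity result holds for any $\tau>0$; no special threshold is needed to rule out slowly decaying homogeneous modes, because the only Killing fields of $(F,b_\epsilon)$ that are $o(1)$ at infinity are identically zero, and the radial integration of $\mathcal L_Vb_\epsilon=O(r^{-\tau})$ propagates the decay directly. The hypothesis $\tau>n/2$ in this paper enters only later, to make the mass integral (\ref{mass}) converge, not to prove Proposition~\ref{equiv}. So that paragraph of your proposal should be rewritten to decouple the two issues.
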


Applying this to $\Phi=\Psi_{12}$ and using (\ref{transf3}), we see the
that being AH is  a well-defined notion indeed.
With these preliminaries out of the way, we now pass to the definition of mass-like invariants for this class of manifolds. For simplicity of notation we set $b=b_{\epsilon}$ and consider the vector space
$$
\Nc_{b}=\{\varphi\in C^{\infty}(F); {\rm Hess}_{b}\varphi=\varphi({\rm Ric}_{b}+nb)\}.
$$
In case $\Nc_{b}$ is non-trivial, the {\em mass functional} of an AH manifold $(M,g)$ is defined, with respect to a {\em given} chart $\Psi$, as being the linear function $\mathfrak m_{\Psi}:\Nc_{b}\to\mathbb R$,
\begin{equation}\label{mass}
\mathfrak m_{\Psi}(\varphi)=\lim_{r\to +\infty}c_n\int_{E_r}\left(\varphi\left({\rm div}_be-d{\rm tr}_be\right)-
i_{\nabla^b\varphi}e+({\rm tr}_be)d\varphi\right)(\nu_{r})dE_{r},
\end{equation}
where $e=\Psi_*g-b$, $\nu_r$ is the unit normal to $E_r=E\times \{r\}$ pointing toward infinity and
$$
c_n=\frac{1}{2(n-1)\omega_{n-1}}.
$$
Standard arguments show that the limit in (\ref{mass}) exists and is finite if $\Psi$ is {\it admissible} in the sense
that (\ref{chart}) is satisfied with $\tau>n/2$ and the difference
\begin{equation}\label{diff}
\mathfrak R_g=R_{\Psi_*g}-R_b
\end{equation}
between scalar curvatures is integrable.
We remark for further reference that, granted this, the mass functional $\mathfrak m_{\Psi}$ can indeed be computed with respect to {\em any} orthonormal frame $\{\eh_\alpha\}$ along $(F,b)$ by means of the following recipe:
\begin{equation}\label{recipe}
\mathfrak m_{\Psi}(\varphi)=\lim_{r\to +\infty}c_n\int_{E_r}J(\varphi)_\alpha\nu_\alpha dE_r,
\end{equation}
where
\begin{equation}\label{recipe2}
J(\varphi)_\alpha=\varphi\left(e_{\alpha\beta,\beta}-e_{\beta\beta,\alpha}\right)
-e_{\alpha\beta}\varphi_\beta+e_{\beta\beta}\varphi_\alpha,
\end{equation}
with $\varphi_\alpha=\eh_\alpha(\varphi)$ and $e_{\alpha\beta,\gamma}=\eh_\gamma(e_{\alpha\beta})$.

However, the question remains of relating this chart-dependent definition  for two distinct  admissible  charts at infinity, say $\Psi_1$ and $\Psi_2$. We start by observing that, by Proposition \ref{equiv}, (\ref{transf3}) implies that
\begin{equation}\label{transffin}
\Psi_{12}=A+O(r^{-\tau}),
\end{equation}
for some isometry $A$ of $(F,b)$.

\begin{proposition}\label{solve}\cite{CH}\cite{CN}
If $\Psi_1$ and $\Psi_2$ are admissible charts at infinity such that (\ref{transffin}) holds, then
\begin{equation}\label{amb}
\mathfrak m_{\Psi_2}(\varphi)=\mathfrak m_{\Psi_1}(\varphi\circ A^{-1}),
\end{equation}
for any $\varphi\in \Nc_b$.
\end{proposition}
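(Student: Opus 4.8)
The plan is to reduce the assertion to the case $A=\mathrm{id}$, which is the substantive point and for which I would invoke the analysis of \cite{CH} and \cite{CN}; the reduction itself is bookkeeping resting on the naturality of the mass integrand. Throughout I write $g_i=(\Psi_i)_*g$ and $e_i=g_i-b$ on $F$, so that by (\ref{mass})--(\ref{recipe2}) one has $\mathfrak m_{\Psi_i}(\varphi)=\lim_{r\to\infty}c_n\int_{E_r}J(\varphi)[e_i](\nu_r)\,dE_r$. Two structural facts underlie the argument. First, every $\varphi\in\Nc_b$ lies in the kernel of $DR_b^*\psi:=\mathrm{Hess}_b\psi-(\Delta_b\psi)b-\psi\,\mathrm{Ric}_b$, the formal adjoint of the linearized scalar curvature $DR_b$: the $b$-trace of $\mathrm{Hess}_b\varphi=\varphi(\mathrm{Ric}_b+nb)$ gives $\Delta_b\varphi=n\varphi$ (using $R_b=-n(n-1)$), and these two relations force $DR_b^*\varphi=0$. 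Second, the pointwise identity underlying (\ref{mass}), $\mathrm{div}_b\bigl(J(\varphi)[e]\bigr)=\varphi\,DR_b(e)-\langle DR_b^*\varphi,e\rangle_b$, therefore simplifies to $\mathrm{div}_b(J(\varphi)[e])=\varphi\,DR_b(e)$ whenever $\varphi\in\Nc_b$; since $DR_b(e)$ differs from $\mathfrak R_g=R_{b+e}-R_b$ only by a term at least quadratic in $e$, of size $O(r^{-2\tau})$, admissibility together with $\tau>n/2$ makes this density integrable over $F$, so the limit defining $\mathfrak m_{\Psi_i}(\varphi)$ is unaffected if $E_r$ is replaced by any hypersurface homologous to it in $F$ and escaping to infinity.

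For the reduction, the integrand in (\ref{mass}) is built invariantly from $e$, $\varphi$, and the metric and Levi-Civita connection of $b$, hence is equivariant under isometries of $(F,b)$. Passing from $\Psi_1$ to the admissible chart $A\circ\Psi_1$, which by (\ref{transffin}) differs from $\Psi_2$ by $\mathrm{id}+O(r^{-\tau})$, and carrying out the change of variables $y=A^{\pm1}(x)$ in the defining flux integral — legitimate since $A^{\pm1}(E_r)$ is homologous in $F$ to a coordinate sphere, so the limit does not see it — one obtains $\mathfrak m_{A\circ\Psi_1}(\varphi)=\mathfrak m_{\Psi_1}(\varphi\circ A^{-1})$ (up to the convention fixing $A$ in (\ref{transffin})); note that $\varphi\circ A^{-1}\in\Nc_b$, the equation defining $\Nc_b$ being $b$-natural. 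It therefore suffices to prove $\mathfrak m_{\Psi_2}(\varphi)=\mathfrak m_{\Psi_1}(\varphi)$ whenever $\Psi_{12}=\mathrm{id}+O(r^{-\tau})$.

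Assume then $\Psi_{12}=\mathrm{id}+\zeta$, with $\zeta$, $\nabla\zeta$ and $\nabla^2\zeta$ all $O(r^{-\tau})$ by Proposition \ref{equiv}. Expanding $g_2=(\Psi_{12})_*g_1$ to first order in $\zeta$ gives
\[
e_2-e_1=-\mathcal{L}_\zeta b+\mathfrak{Q},
\]
with $\mathfrak{Q}$ at least quadratic in $(\zeta,\nabla\zeta,e_1,\nabla e_1)$, hence $\mathfrak{Q}=O(r^{-2\tau})$, and likewise for its first frame derivatives once one integrates by parts over $E_r$ to absorb the limited differentiability of $e_1$ (or, alternatively, first treats regular $e_i$ and then passes to the general case by density). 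Splitting the flux integral accordingly, the $\mathfrak{Q}$-part tends to $0$: each $\varphi\in\Nc_b$ grows at most linearly and $dE_r=O(r^{n-1})$, so its integrand is $O(r^{1-2\tau})$ and the integral $O(r^{n-2\tau})=o(1)$, precisely because $\tau>n/2$. For the $\mathcal{L}_\zeta b$-part, the second structural fact applies with $e=\mathcal{L}_\zeta b$: here $DR_b(\mathcal{L}_\zeta b)=\mathcal{L}_\zeta R_b=0$, $R_b$ being constant, while $\langle DR_b^*\varphi,\mathcal{L}_\zeta b\rangle_b=0$ since $\varphi\in\ker DR_b^*$; hence $J(\varphi)[\mathcal{L}_\zeta b]$ is $b$-divergence free and its flux through $E_r$ is independent of $r$.

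The heart of the matter — and the step I expect to be the main obstacle — is to show that this $r$-independent flux is in fact zero. A crude size estimate only bounds it by $O(r^{n-\tau})$, which need not be $o(1)$ in the admissible range $n/2<\tau\le n$, so closing the argument requires the finer analysis of \cite{CH} and \cite{CN}, which I would cite rather than reproduce: morally, one improves the decay rate of the gauge $\zeta$ by a bootstrap — the Killing fields of the model grow at most linearly and so cannot interfere with a gauge that already decays — replacing $\zeta$ by a vector field of strictly faster decay without altering $\mathcal{L}_\zeta b$ modulo terms of manifestly vanishing flux, iterating until the exponent exceeds $n$ and the crude estimate closes. Granting this, the $\mathcal{L}_\zeta b$-term contributes nothing in the limit, so $\mathfrak m_{\Psi_2}(\varphi)=\mathfrak m_{\Psi_1}(\varphi)$ in the reduced situation, and together with the first step this yields (\ref{amb}). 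I would stress that $\tau>n/2$ enters in two complementary ways: it renders the quadratic remainder $\mathfrak{Q}$ negligible in the limit, and (together with integrability of $\mathfrak R_g$) it is what makes the defining limits exist in the first place.
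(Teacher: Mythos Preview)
The paper does not prove this proposition: it is stated as a result from the literature, with the citations \cite{CH} and \cite{CN} serving in lieu of a proof. Your proposal therefore goes well beyond what the paper itself does --- you have sketched the actual argument underlying those references (the linearization identity $\mathrm{div}_b J(\varphi)[e]=\varphi\,DR_b(e)-\langle DR_b^*\varphi,e\rangle_b$, the observation that $\Nc_b\subset\ker DR_b^*$, the reduction to $A=\mathrm{id}$ by naturality, the decomposition $e_2-e_1=-\mathcal L_\zeta b+\mathfrak Q$, and the separate handling of the quadratic remainder and the Lie-derivative term). This is the standard route, and your outline is accurate; you are also honest that the crux --- showing the $r$-independent flux of $J(\varphi)[\mathcal L_\zeta b]$ actually vanishes when only $\tau>n/2$ is assumed --- is where the real work of \cite{CH}, \cite{CN} (and, in a cleaner formulation, \cite{M}) lies, and you appropriately defer to them there. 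In short: the paper's ``proof'' is a citation, and your sketch correctly reconstructs what that citation contains.
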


This result makes it clear the difficulty of extracting geometric information out of the family of functional $\mathfrak m_{\Psi}$, with $\Psi$ running over the set of admissible charts, since the indicated action of the isometry group of $b$ on $\Nc_b$ shows up as one passes from one chart to another. Thus, a detailed knowledge of the structure of the action is required in order to proceed.
In this regard  we now discuss two important examples.

\begin{example}\label{another}{\rm
It is shown in \cite{CN} that if either $\epsilon=-1$ and ${\rm Ric}_h<0$ or $\epsilon=0$ and $(E,h)$ is a flat space form then $\dim \Nc_b=1$, with $\Nc_b$ being generated by $\rho$.  In particular, $\mathfrak m_{\Psi}(\rho)$ does {\em not} depend on the chosen chart $\Psi$ and this common value is, by definition, the {\em mass} of $(M,g)$, denoted $\mathfrak m_{(M,g)}$.
No positive mass inequality seems to be known here and in Theorem \ref{another3} below we provide such an inequality in the setting of AH hypersurfaces, assuming that the corresponding dominant energy condition holds.}
\end{example}

\begin{example}\label{subtle}{\rm
A much subtler case takes place when $(E,h)$ is the unit $(n-1)$-sphere with the standard round metric, so that $(F,b_1)$, $F=E\times(0,+\infty)$, is hyperbolic space $\mathbb H^{n}$. In this case, $\Nc=\Nc_{b_1}$ is generated by  $\{\rho^{(i)}\}_{i=0}^n$, where $\rho^{(i)}=z_i$ are the linear coordinates in Lorentz space $\mathbb L^{n+1}$ seen as functions on the standard hyperboloid model $\mathbb H^{n}\subset \mathbb L^{n+1}$.
We note that $\rho=\rho^{(0)}$.
Here, the background isometry group $O^+(n,1)$ acts naturally on $\Nc$ preserving the metric
\begin{equation}\label{lorent}
( z,w)=z_0w_0-z_1w_1-\cdots -z_nw_n,
\end{equation}
with $\{\rho^{(i)}\}$ as an orthonormal basis.
We provide $\Nc$ with a time orientation by declaring that $\rho^{(0)}$ is future direct.
Thus, if we set, for any admissible $\Psi$,
\begin{equation}\label{admi}
P_{i}=\mathfrak m_{\Psi}(\rho^{(i)}),\quad i=0,1,\cdots, n,
\end{equation}
then, as explained in \cite{CH}, the only {\em chart-independent} information available out of $P$ are its causal character, past/future pointing nature and the numerical invariant
\begin{equation}\label{masshyp}
\mathfrak m^2_{(M,g)}=\left|P_{0}^2-\sum_{\a=1}^n P_{\a}^2\right|.
\end{equation}
This was also considered by Wang \cite{Wa} in case $(X,g)$ is conformally compact.
We note that if
$$
\Cc^+=\{w\in \Nc;(w,w)=0,w_0>0\}
$$ is the future-directed light-cone and
$
\Nc^+
$
is its interior, then
$P$ defined in (\ref{admi}) is timelike future-directed if and only if $\mathfrak m_{\Psi}(\varphi)>0$ for any $\varphi\in\Nc^+$.
In low dimensions and in the spin case it has been proved  under the usual dominant energy assumption $R_g\geq -n(n-1)$
that $P$ is either time-like future directed or zero, with the latter case holding if and only if $(M,g)$ is isometric to hyperbolic space; see \cite{ACG} \cite{CH} \cite{Wa}. Thus, whenever $P=\mathfrak m_{\Psi}$ is time-like, it is natural to choose the sign of $\mathfrak m_{(M,g)}$ so as to coincide with that of $P_{0}$. With this choice the above mentioned rigidity result says that $\mathfrak m_{(M,g)}>0$ unless that $(M,g)$ is hyperbolic space (where the mass vanishes).
We also remark that it is proven in \cite{CH} that $\mathfrak m_{(M,g)}>0$ if additionally $(M,g)$ carries a black hole horizon $\Gamma$ whose mean curvature  is at most $n-1$.
In any case, if we {\em assume} that $P$ is timelike future directed then, as already observed in \cite{DGS}, the mass can be rewritten as
\begin{equation}\label{faju}
\mathfrak m_{(M,g)}=\inf_{\varphi\in \Nc^1}\mathfrak m_{\Psi}(\varphi),
\end{equation}
where
$$
\Nc^1=\{\varphi\in\Nc^+;(\varphi,\varphi)=1\}
$$
is the unit hyperpoloid in $\Nc^+$.
Equivalently, we can always replace $\Psi$ with $A\circ\Psi$, where $A$ is a hyperbolic isometry, and assume that
\begin{equation}\label{faju2}
\mathfrak m_{(M,g)}=\mathfrak m_{\Psi}(\rho).
\end{equation}
Charts with this property are called {\em balanced}. Starting from (\ref{faju2}) we will be able to establish an optimal Penrose inequality for certain AH quasi-graphs graphs in hyperbolic space; see Theorem \ref{penroptimal0} and Remark \ref{last}.
}
\end{example}

\subsection{AH hypersurfaces: a description of the results}\label{descr}

In this subsection we describe our general setup and state the main results in the paper.

Let $F=E\times(r_0,+\infty)$ be an $n$-dimensional model space endowed with the reference metric (\ref{metric}) as in the previous subsection and
consider the warped product $(\overline F,\overline b)$, with $\overline F=F\times {{I}}$ and
\begin{equation}\label{metric2}
\overline b=b+\rho^2dt^2,
\end{equation}
where $t$ is the standard linear coordinate in $I\subset \mathbb R$. It is well-known that the assumption $\rho\in \Nc_b$ is equivalent to  $(\overline F,\overline b)$ being  Einstein. Notice that each $t\in I$ defines a horizontal slice $F_{t}=F\times \{t\}\hookrightarrow \overline F$ which is totally geodesic, so that $F_t=F$ isometrically. This follows easily from the fact that $X=\partial/\partial t$, the vertical coordinate field, is Killing. Notice moreover that from $\rho=|X|_{\overline b}$ we find that
\begin{equation}\label{framevert}
\eh_{0}=\rho^{-1}X
\end{equation}
is the unit normal to the slices. We finally consider an $(n+1)$-dimensional Riemannian manifold $(\overline M,\overline g)$ endowed with a globally defined Killing field $\overline X$. We assume that there exists a closed subset $\overline C\subset \overline M$ such that $\overline M-\overline C$ is {\em isometric} to our warped product model $(\overline F,\overline b)$, with   $\overline X$ corresponding to $X$ under the identification given by the isometry.

\begin{definition}\label{asympdef}
Let $(\overline M,\overline g)$ be as  above.
A complete, isometrically immersed hypersurface $(M,g)\looparrowright (\overline M,\overline g)$, possibly with an inner boundary $\Gamma$, is {\em asymptotically hyperbolic (AH)} if  there exists a compact subset $K\subset M$ such that $F_M=M-K$, the end of $M$, can be written as a vertical graph over some slice $F\hookrightarrow \overline M-\overline C$, with the graph being  associated to a smooth function $u:F\to\mathbb R$ such that
\begin{equation}\label{chart2}
\sum_\alpha\left|\rho u_\alpha\right|+\sum_{\alpha\beta}
\left|\rho_\beta u_\alpha+\rho u_{\alpha\beta}\right|=O(r^{-\frac{\tau}{2}}),
\end{equation}
for some $\tau>n/2$,
where $u_\alpha=\tilde\eh_\alpha(u)$, $u_{\alpha\beta}=\tilde\eh_{\beta}(\tilde\eh_\alpha (u))$, etc.
Moreover, we assume that $\mathfrak R_g=R_{{\Psi_u}_*g}-R_b$ is integrable, where $\Psi^{-1}_u(x)=(x,u(x))$, $x\in F$.
\end{definition}

The decay conditions (\ref{chart2}) are tailored so that,  by the remarks in Subsection \ref{review} and (\ref{mett1}) below, it makes sense to compute, for $\varphi\in \Nc_{b_{\epsilon}}$, the mass $\mathfrak m_{\Psi_u}(\varphi)$, where $\Psi_u$ is the graph coordinate chart in the definition. But notice that, in general, this number has no invariant meaning due to the transformation rule (\ref{amb}).

We further assume that $M\looparrowright\overline M$ is {\em two-sided} in the sense that it carries a globally defined  unit normal $N$, which we choose so that $N=\eh_{n+1}$ at infinity.
This allows us to consider the {\em angle function} $\Theta_{\overline X}=\langle \overline X,N\rangle:M\to\mathbb R$ associated to $\overline X$.
We then say that $\Theta_{\overline X}$ {\em does not change sign} if $\Theta_{\overline X}\geq 0$ long $M$.

The following theorem computes the mass $\mathfrak m_{\Psi_u}(\rho)$, where $\Psi_u$ is the graph representation at infinity of an AH hypersurface.

\begin{theorem}\label{another2}
If $(M,g)\looparrowright(\overline M,\overline g)$ is as in Definition \ref{asympdef} and $\Gamma=\emptyset$ then
\begin{equation}\label{form1}
\mathfrak m_{\Psi_ u}(\rho)=c_n\int_M \left(2S_2\Theta_{\overline X}+{\rm Ric}_{\overline g}(\overline X^T,N)\right) dM.
\end{equation}
\end{theorem}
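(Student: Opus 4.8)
The plan is to derive \eqref{form1} as an application of the flux-type (Reilly-type) identity alluded to in the introduction, combined with the computation of the mass functional in the graph chart via the recipe \eqref{recipe}--\eqref{recipe2}. First I would recall the algebraic setup on $M\looparrowright(\overline M,\overline g)$: let $N$ be the chosen unit normal (with $N=\eh_{n+1}$ at infinity), let $A$ be the shape operator, and let $S_1=\tr A$, $S_2$ be the first two elementary symmetric functions of its principal curvatures, with associated first Newton tensor $P_1=S_1\,\mathrm{Id}-A$. Decompose the Killing field as $\overline X=\overline X^T+\Theta_{\overline X}N$ along $M$, where $\overline X^T$ is tangent to $M$ and $\Theta_{\overline X}=\langle \overline X,N\rangle$. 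The key pointwise identity I would invoke is the divergence formula
\begin{equation}\label{plan:reilly}
\mathrm{div}_M\!\left(P_1(\overline X^T)\right)=2S_2\,\Theta_{\overline X}+{\rm Ric}_{\overline g}(\overline X^T,N),
\end{equation}
valid because $\overline X$ is Killing and $(\overline M,\overline g)$ is Einstein at infinity; this is exactly the identity going back to Reilly \cite{R} and developed in \cite{ARS}\cite{Ro}\cite{ABC}\cite{dL}\cite{AdLM}, and it is the analogue of what was used in \cite{dLG}. Granting \eqref{plan:reilly}, the right-hand side of \eqref{form1} is $c_n\int_M \mathrm{div}_M\!\left(P_1(\overline X^T)\right)dM$, so by the divergence theorem on the AH manifold $M$ (with $\Gamma=\emptyset$) it reduces to the flux of $P_1(\overline X^T)$ through the coordinate spheres $E_r$ at infinity:
\begin{equation}\label{plan:flux}
c_n\int_M\!\left(2S_2\Theta_{\overline X}+{\rm Ric}_{\overline g}(\overline X^T,N)\right)dM
=\lim_{r\to+\infty}c_n\int_{E_r}\big\langle P_1(\overline X^T),\nu_r\big\rangle\,dE_r.
\end{equation}
One must check that the interior contributions vanish in the limit (they do: $M$ is complete without boundary and, by the decay \eqref{chart2}, the flux integrals converge), which is a routine application of the divergence theorem together with the integrability of $\mathfrak R_g$ assumed in Definition \ref{asympdef}.

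The substantive step is then to identify the boundary flux in \eqref{plan:flux} with the mass $\mathfrak m_{\Psi_u}(\rho)$. Here I would use that on the end $F_M=M-K$ the hypersurface is the vertical graph of $u:F\to\mathbb R$, so $\overline X=X=\partial/\partial t$ pushes forward under $\Psi_u$, and $\Theta_{\overline X}=\rho/\sqrt{1+\rho^2|\nabla u|^2}+O(r^{-\tau})\to 1$ while the induced metric satisfies $\Psi_{u*}g=b+e$ with $e=\rho^2\,du\otimes du$ to leading order (this is the content of what the excerpt calls \eqref{mett1}). The plan is to compute, in the orthonormal frame $\{\tilde\eh_\alpha\}$ of \eqref{frame}, that the leading term of $\langle P_1(\overline X^T),\nu_r\rangle$ coincides with the integrand $J(\rho)_\alpha\nu_\alpha$ of \eqref{recipe2} with $\varphi=\rho$. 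Concretely, $\overline X^T=\overline X-\Theta_{\overline X}N$, and since $\rho=|X|_{\overline b}$ and $\rho\in\Nc_b$, the components $e_{\alpha\beta}=\rho^2 u_\alpha u_\beta$ and the derivatives $\rho_\alpha=\tilde\eh_\alpha(\rho)$ reorganize the expression $\langle P_1(\overline X^T),\nu_r\rangle$ — after discarding terms that are $O(r^{-2\tau+\cdots})$ and integrate to zero over $E_r$ — precisely into $\rho(e_{\alpha\beta,\beta}-e_{\beta\beta,\alpha})-e_{\alpha\beta}\rho_\beta+e_{\beta\beta}\rho_\alpha$ contracted with $\nu_\alpha$. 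Taking $r\to+\infty$ and using \eqref{recipe} then yields $\lim_r c_n\int_{E_r}\langle P_1(\overline X^T),\nu_r\rangle dE_r=\mathfrak m_{\Psi_u}(\rho)$, which combined with \eqref{plan:flux} gives \eqref{form1}.

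The main obstacle I anticipate is this last matching of the geometric flux with the Riemannian mass integrand: it requires a careful expansion of the shape operator $A$, the Newton tensor $P_1$, and the tangential Killing field $\overline X^T$ of the graph in the frame \eqref{frame}, keeping track of exactly which error terms are negligible under the decay rate $\tau>n/2$ (so that cross terms of order $r^{-\tau}\cdot r^{-\tau}$ and the like vanish in the limit, while genuinely linear-in-$e$ terms survive) and verifying that the total-derivative/divergence pieces on $E_r$ drop out. This is precisely the place where the hypothesis $\tau>n/2$ is used, and where the Einstein condition on $(\overline M,\overline g)$ near infinity enters to control ${\rm Ric}_{\overline g}$. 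A secondary point to be careful about is the sign and normalization conventions for $S_2$ and $P_1$, so that the factor $2$ and the constant $c_n$ in \eqref{form1} come out correctly; I would fix these once and for all at the start. Everything else — the divergence theorem, the convergence of the flux integrals — follows from the hypotheses in Definition \ref{asympdef} in a standard way.
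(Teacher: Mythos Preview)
Your proposal is correct and follows essentially the same route as the paper: integrate the Reilly-type divergence identity \eqref{plan:reilly} (which is exactly the paper's (4.1), valid for any Killing field without an Einstein assumption), pass to the flux at infinity, and then match $\langle P_1(\overline X^T),\nu_r\rangle$ with $J(\rho)_\alpha\nu_\alpha$ using the explicit graph computations of Section~\ref{geo} (Propositions \ref{data} and \ref{key2}) together with $e_{\alpha\beta}=\rho^2 u_\alpha u_\beta$. One small slip: $\Theta_{\overline X}=\rho/W$ does not tend to $1$ but rather grows like $\rho$; this is harmless for your argument, since what is actually used is $W\to 1$ so that $W^{-3}(GX^T)_\alpha$ and $(GX^T)_\alpha$ have the same flux limit.
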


Here, $S_2$ is the $2$-mean curvature of $M$; see (\ref{twomean}) below. The following positive mass inequality is then an immediate consequence.

\begin{theorem}\label{another3}
If, in addition to the hypothesis of Theorem \ref{another2}, we are under the conditions of Example \ref{another} and, moreover, $(\overline M,\overline g)$ is Einstein, i.e ${\rm Ric}_{\overline g}=-n\overline g$,  then
\begin{equation}\label{an}
\mathfrak m_{(M,g)}=c_n\int_M \Theta_{\overline X}\mathfrak R_g dM,
\end{equation}
where $\mathfrak R_g=R_g+n(n-1)$.
In particular, if $\Theta_{\overline X}$ does not change sign and $\mathfrak R_g\geq 0$ outside  the zero set of $\Theta_{\overline X}$ then $\mathfrak m_{(M,g)}\geq 0$.
\end{theorem}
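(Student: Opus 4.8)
The plan is to deduce everything from the mass formula (\ref{form1}) of Theorem \ref{another2} by inserting the two additional hypotheses. Since we are in the situation of Example \ref{another} — that is, $\epsilon=-1$ with ${\rm Ric}_h<0$, or $\epsilon=0$ with $(E,h)$ a flat space form — the space $\Nc_b$ is one-dimensional, spanned by $\rho$, so that $\mathfrak m_{\Psi_u}(\rho)$ does not depend on the admissible chart and, by definition, equals the mass $\mathfrak m_{(M,g)}$. (The chart $\Psi_u$ is admissible because of the decay (\ref{chart2}) and the integrability of $\mathfrak R_g$ built into Definition \ref{asympdef}.) Thus the left-hand side of (\ref{form1}) is already the invariant we are after.

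Next I would simplify the integrand on the right of (\ref{form1}) using the Einstein condition ${\rm Ric}_{\overline g}=-n\overline g$. Writing $\overline X=\overline X^T+\Theta_{\overline X}N$, with $\overline X^T$ the component tangent to $M$, the cross Ricci term vanishes, ${\rm Ric}_{\overline g}(\overline X^T,N)=-n\,\overline g(\overline X^T,N)=0$, since $\overline X^T$ is tangent and $N$ is normal to $M$. Hence (\ref{form1}) collapses to $\mathfrak m_{(M,g)}=2c_n\int_M S_2\,\Theta_{\overline X}\,dM$.

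The remaining point is to identify $2S_2$ with $\mathfrak R_g=R_g+n(n-1)$, and this is exactly the twice-traced Gauss equation for the two-sided hypersurface $M\looparrowright(\overline M,\overline g)$. If $A$ is the shape operator with respect to $N$, with trace $S_1={\rm tr}\,A$, then $R_g=R_{\overline g}-2{\rm Ric}_{\overline g}(N,N)+S_1^2-|A|^2$, and since $S_1^2-|A|^2=2S_2$ by the definition of the $2$-mean curvature, we get $R_g=R_{\overline g}-2{\rm Ric}_{\overline g}(N,N)+2S_2$. Plugging in $R_{\overline g}=-n(n+1)$ and ${\rm Ric}_{\overline g}(N,N)=-n$ (both immediate from ${\rm Ric}_{\overline g}=-n\overline g$ in dimension $n+1$) yields $R_g=-n(n+1)+2n+2S_2=-n(n-1)+2S_2$, i.e. $2S_2=\mathfrak R_g$. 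Substituting into the previous display gives (\ref{an}). No integrability issue is introduced, since we have only rewritten the integrand of (\ref{form1}) pointwise; equivalently, the right-hand side of (\ref{an}) is integrable because $\mathfrak R_g$ is (Definition \ref{asympdef}) and $\Theta_{\overline X}$ obeys the decay that definition forces on the graph data.

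Finally, the positive mass inequality is immediate from (\ref{an}): if $\Theta_{\overline X}\geq 0$ everywhere on $M$ and $\mathfrak R_g\geq 0$ off the zero set $\{\Theta_{\overline X}=0\}$, then the integrand $\Theta_{\overline X}\mathfrak R_g$ is $\geq 0$ at every point — it vanishes where $\Theta_{\overline X}=0$ and is a product of nonnegative factors elsewhere — so $\mathfrak m_{(M,g)}\geq 0$. I do not expect a genuine obstacle in this argument, as the analytic content is already contained in Theorem \ref{another2}; what remains is essentially bookkeeping. The two points one must be careful about are conventions — the normalization of $S_2$ (so that $2S_2=S_1^2-|A|^2$) and the signs in the Gauss equation (irrelevant for the terms quadratic in $A$ but decisive for the term ${\rm Ric}_{\overline g}(N,N)$) — and the fact that it is the \emph{global} Einstein condition on $(\overline M,\overline g)$, not merely its validity on the end $\overline M-\overline C\cong(\overline F,\overline b)$, that is used both to kill the cross Ricci term and to convert $S_2$ into scalar curvature.
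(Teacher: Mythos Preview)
Your proposal is correct and matches the paper's own argument essentially verbatim: the paper derives Theorem \ref{another3} in one line from Theorem \ref{another2} together with (\ref{eins}) at $\lambda=-n$ and the identification $\mathfrak m_{(M,g)}=\mathfrak m_{\Psi_u}(\rho)$ from Example \ref{another}, exactly as you do. Your explicit observation that the Einstein condition kills the cross term ${\rm Ric}_{\overline g}(\overline X^T,N)$ and your derivation of $2S_2=\mathfrak R_g$ via the Gauss equation are precisely the content of (\ref{ger})--(\ref{eins}) specialized to $\lambda=-n$.
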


Theorem \ref{another3} follows from Theorem \ref{another2} and (\ref{eins}) below with $\lambda=-n$, after noticing that $\mathfrak m_{(M,g)}=\mathfrak m_{\Psi_u}(\rho)$ by  Example \ref{another}.

We
now discuss Penrose-like inequalities in the context of Example \ref{subtle}.
In the presence of an outermost minimal horizon $\Gamma\subset M$, the conjectured inequality reads as
\begin{equation}\label{penrhypger}
\mathfrak m_{(M,g)}\geq \frac{1}{2}\left[\left(\frac{A}{\omega_{n-1}}\right)^{\frac{n-2}{n-1}}+
\left(\frac{A}{\omega_{n-1}}\right)^{\frac{n}{n-1}}
\right];
\end{equation}
see \cite{BC} and \cite{Ma} for details and also for a discussion of the corresponding rigidity results. Recently, versions of (\ref{penrhypger}) have been proved in \cite{DGS}  for certain AH graphs in hyperbolic space $\mathbb H^{n+1}$ under the usual dominant energy condition. Here we will be mainly interested in the case $\Gamma$ has constant mean curvature equal to $n-1$, where the conjectured inequality assumes the classical form (\ref{penrose}); see \cite{Wa}, \cite{BC} and \cite{Ma}. This is proved here for a class of graphs in $\mathbb H^{n+1}$.
To describe the result, we consider the metric (\ref{metric}) in $\mathbb H^{n+1}=\mathbb H^n\times \mathbb R$
and
an AH graph $M$ given by a function $u:\mathbb H^n\to\mathbb R$ as in Definition \ref{asympdef}. Following \cite{DGS} we say that $M$ is {\em balanced} if
$\Psi_u$ is balanced in the sense of Example \ref{subtle}. For $d\in\mathbb R$ we also consider the horosphere $\Hc_{d,\pm}$ given  as the graph of the function
\begin{equation}\label{balhor}
v(x)=d\pm\log \rho(x), \quad x\in\mathbb H^n.
\end{equation}
Any horosphere in this family is said to be {\em  balanced}.

With this notation at hand, we now state the optimal Penrose inequality for balanced AH graphs.

\begin{theorem}\label{penroptimal0}
Let $(M, g)\subset   (\mathbb H^{n+1}, \overline b)$ be a balanced AH graph as above and assume that $M$ carries an inner boundary
$\Gamma$ lying in some balanced horosphere $\Hc$. Assume further that $M$ meets $\Hc$ orthogonally along $\Gamma$ and that $\Gamma\subset \Hc$ is mean convex with respect to its inward unit normal. Then, if $R_g\geq -n(n-1)$,
\begin{equation}\label{penrchic}
\mathfrak m_{(M,g)}\geq \frac{1}{2}\left(\frac{A}{\omega_{n-1}}\right)^{\frac{n-2}{n-1}}.
\end{equation}
\end{theorem}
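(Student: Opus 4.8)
The plan is to run the same flux/divergence machinery used for Theorem \ref{another2}, but now keeping track of the boundary term coming from $\Gamma$. Since $M$ is a graph over $\mathbb H^n$ and $\overline X = \partial/\partial t$ is Killing with $(\overline M,\overline g)$ Einstein (${\rm Ric}_{\overline g}=-n\overline g$), the Reilly-type identity referred to in the introduction says that, up to the angle factor $\Theta_{\overline X}$, the quantity $2S_2\Theta_{\overline X}+{\rm Ric}_{\overline g}(\overline X^T,N)$ is the divergence of the vector field $P_1(\overline X^T)$ obtained by applying the first Newton tensor of $M$ to the tangential part of $\overline X$. Integrating this divergence over the region of $M$ between $\Gamma$ and a large coordinate sphere $E_r$, and letting $r\to\infty$, the flux through $E_r$ converges to $\mathfrak m_{\Psi_u}(\rho)$ by the recipe (\ref{recipe})--(\ref{recipe2}) (this is exactly how Theorem \ref{another2} is proved), while a new boundary integral over $\Gamma$ appears. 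Because $M$ is balanced, (\ref{faju2}) gives $\mathfrak m_{(M,g)}=\mathfrak m_{\Psi_u}(\rho)$, so we obtain
\begin{equation}\label{masswithbdry}
\mathfrak m_{(M,g)} = c_n\int_M\left(R_g+n(n-1)\right)\Theta_{\overline X}\,dM \;+\; c_n\int_\Gamma (\text{boundary integrand})\,d\Gamma,
\end{equation}
after substituting the Einstein condition (the analogue of passing from Theorem \ref{another2} to Theorem \ref{another3}). Under $R_g\geq -n(n-1)$ the bulk term is nonnegative provided $\Theta_{\overline X}\geq 0$, which must be arranged, so everything reduces to estimating the boundary term from below by $\frac12(A/\omega_{n-1})^{(n-2)/(n-1)}$.

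The next step is to identify the boundary integrand explicitly using the geometric hypotheses on $\Gamma$. Since $\Gamma$ lies in a balanced horosphere $\Hc$ (the graph of $v=d\pm\log\rho$), and $M$ meets $\Hc$ orthogonally along $\Gamma$, the outward conormal of $\Gamma$ in $M$ is the unit normal to $\Hc$ in $\mathbb H^{n+1}$; this is what makes the Newton-tensor boundary term computable. One evaluates $\langle P_1(\overline X^T),\eta\rangle$ along $\Gamma$, where $\eta$ is the outward conormal: the orthogonality forces $\overline X^T$ to be tangent to $\Gamma$, and $P_1$ restricted to $\Gamma$ can be expressed in terms of the second fundamental form of $\Gamma$ as a hypersurface of $\Hc$ together with the (constant) second fundamental form of $\Hc$ itself. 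Crucially, a balanced horosphere has constant mean curvature $n-1$ in $\mathbb H^{n+1}$ — this is precisely why the choice (\ref{balhor}) is made, and it is the feature that will make the classical Penrose exponent $(n-2)/(n-1)$ appear rather than the two-term right-hand side of (\ref{penrhypger}). After this reduction, the boundary integral should collapse to a multiple of $\int_\Gamma H_\Gamma\,d\Gamma$ (the total mean curvature of $\Gamma$ inside $\Hc$) plus a multiple of $A=|\Gamma|$, with $H_\Gamma\geq 0$ by the mean-convexity assumption.

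The final step is the isoperimetric-type estimate on $\Gamma$. A balanced horosphere is intrinsically flat (isometric to Euclidean $\mathbb R^{n-1}$), so $\Gamma\subset\Hc$ is a closed hypersurface in Euclidean space, and one may invoke the sharp Euclidean geometric inequality bounding $\int_\Gamma H_\Gamma\,d\Gamma$ from below in terms of $|\Gamma|^{(n-2)/(n-1)}$ — this is the Alexandrov–Fenchel / Minkowski inequality for mean-convex (or convex, if that is all one can guarantee) hypersurfaces, exactly as used in \cite{dLG} and in Lam's argument. Combining this with the nonnegativity of the bulk term in (\ref{masswithbdry}) and tracking the constants (which are normalized so the Schwarzschild-AdS horizon gives equality) yields (\ref{penrchic}).

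I expect the main obstacle to be the boundary-term computation: showing that the orthogonality of $M$ and $\Hc$, combined with the constant mean curvature $n-1$ of the balanced horosphere, makes the Newton-tensor flux through $\Gamma$ reduce cleanly to $c_n'\int_\Gamma H_\Gamma\,d\Gamma$ plus an area term with exactly the right coefficients — and, relatedly, verifying that the geometric inequality one needs on $\Gamma\subset\Hc\cong\mathbb R^{n-1}$ is available under just "mean convex" rather than "convex," or else arguing (as in \cite{dLG}) that one may reduce to the convex case. Ensuring the sign condition $\Theta_{\overline X}\geq 0$ holds along $M$ for a balanced graph meeting $\Hc$ orthogonally is a secondary point that also needs to be checked.
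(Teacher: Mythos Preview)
Your overall strategy matches the paper's exactly: integrate the flux identity ${\rm div}_g\,G\overline X^T = 2S_2\Theta_{\overline X} + {\rm Ric}_{\overline g}(N,\overline X^T)$ over $M$, identify the flux at infinity with $\mathfrak m_{\Psi_u}(\rho) = \mathfrak m_{(M,g)}$ via balancedness, drop the nonnegative bulk term (for a graph $\Theta_{\overline X}=\rho/W>0$ automatically, so your secondary worry is a non-issue), and finish with Alexandrov--Fenchel inside the intrinsically Euclidean horosphere.

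However, your boundary analysis contains a concrete error. The orthogonality does \emph{not} force $\overline X^T$ to be tangent to $\Gamma$; on the contrary, the $\eta$-component $\langle\overline X,\eta\rangle$ is exactly what carries the entire boundary contribution. Writing $\overline X^T=\langle\overline X,\eta\rangle\eta+\sum_l\langle\overline X,\tilde\eh_l\rangle\tilde\eh_l$, orthogonality gives $\eta=\pm\xi$ (the unit normal of the totally umbilic $\Hc$), which makes the cross terms $\langle G\tilde\eh_l,\eta\rangle=-\langle B\tilde\eh_l,\eta\rangle$ vanish and shows $\eta$ is a principal direction, so $\langle G\eta,\eta\rangle=S_1-\langle B\eta,\eta\rangle$ equals the mean curvature $S_1(\Gamma)$ of $\Gamma$ inside $\Hc$. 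The step you are missing is then a pointwise identity specific to balanced horospheres: $\langle\overline X,\xi\rangle=\rho\langle\eh_0,\xi\rangle=\rho/\Wc$ with $\Wc=\sqrt{1+|\nabla^b\rho|^2_b}$, and $|\nabla^b\rho|^2=\rho^2-1$ (equation (\ref{relation}) rewritten) gives $\Wc=\rho$, hence $\langle\overline X,\eta\rangle=\pm 1$. Thus the boundary integrand is exactly $S_1(\Gamma)$, with \emph{no} area term and no $\rho$-weight; compare Remark \ref{really}, where $\Gamma$ sits in a totally geodesic slice and one gets $\rho\,S_1(\Gamma)$ instead. So the operative feature of the balanced horosphere is not its mean curvature $n-1$ but the identity $\langle\overline X,\xi\rangle\equiv 1$; if you had followed your own claim that $\overline X^T$ is tangent to $\Gamma$, the boundary term would vanish and you would obtain only $\mathfrak m_{(M,g)}\geq 0$, not the Penrose inequality.
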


\begin{remark}\label{ortho}
{\rm
The orthogonality assumption easily implies that the mean curvature of $\Gamma\subset M$ is $n-1$, so that $\Gamma$ is a horizon indeed.
}
\end{remark}

\begin{remark}\label{expli}
{\rm It will be convenient to
consider  the Poincar\'e disk model for $\mathbb H^n$, so that
$$
\mathbb H^n=\{x\in \mathbb R^n; |x|<1\},
$$
$$
b=\frac{4}{(1-|x|^2)^2}(dx_1^2+\cdots +dx_n^2)
$$
and
$\Nc$ is generated by
$$
\rho(x)=\frac{1+|x|^2}{1-|x|^2}, \quad \rho^{(\alpha)}=\frac{2x_\alpha}{1-|x|^2},\quad \alpha=1,\cdots,n.
$$
Notice that
\begin{equation}\label{relation}
\rho^2-\sum_{\alpha=1}^n(\rho^{(\alpha)})^2=1.
\end{equation}
Moreover, we can isometrically embed $\mathbb H^n$ into the standard half-space model
$$
\mathbb H^{n+1}_u=\left\{y=(y_1,\cdots,y_{n+1})\in\mathbb R^{n+1};y_{n+1}>0\right\}
$$
as the unit upper hemisphere centered at the origin. This embedding extends to an isometry between our original model $(\mathbb H^{n+1},\overline b)$ and $\mathbb H^{n+1}_u$ explicitly given by
$$
\Upsilon(x,s)=e^s\left(\frac{2x}{1+|x|^2},\frac{1-|x|^2}{1+|x^2|}
\right),\qquad s\in\mathbb R.
$$
Thus we see that in $\mathbb H^{n+1}_u$ the Killing field corresponding to $\overline X=\rho\eh_0$
is the radial vector field and the
horospheres in the family $\Hc_{d,+}$ (respectively, $\Hc_{d,-}$) are horizontal hyperplanes (respectively, spheres tangent to the hyperplane $y_{n+1}=0$ at the origin).
}
\end{remark}

\section{The geometry of graphs in warped products}\label{geo}

As a preparation for the proof of Theorem \ref{another2}, we now consider
a two-sided asymptotically flat  hypersurface
$(M,g)\looparrowright(\overline M,\overline g)$
as in Definition \ref{asympdef}. If $\overline \nabla$ is the Riemannian connection of $(\overline M,\overline g)$, let us denote by  $B=-\overline \nabla N$ the shape operator of $M$ with respect to its unit normal vector $N$ and by  $k_1,\ldots,k_n$  the eigenvalues of $B$ with respect to $g$ (the principal curvatures). Define
\begin{equation}\label{onemean}
S_1=\sum_ik_i
\end{equation}
and
\begin{equation}\label{twomean}
S_2=\sum_{i<j}k_ik_j.
\end{equation}
These are respectively the {\em  mean curvature} and the $2$-{\em mean curvature} of $M$. Notice that from Gauss equation we have
\begin{equation}\label{ger}
R_g=R_{\overline g}-2{\rm Ric}_{\overline g}(N,N)+2S_2.
\end{equation}
In particular, if $\overline M$ is Einstein, ${\rm Ric}_{\overline g}=\lambda \overline g$, this reduces to
\begin{equation}\label{eins}
R_g=(n-1)\lambda+2S_2.
\end{equation}
Also, we define  the {\em Newton tensor} by
\begin{equation}\label{zero}
G=S_1I-B,
\end{equation}
where $I$ is the identity map.

Later on we will need the expressions of some of these invariants along the end  $F_M$ of $M$ which, by Definition \ref{asympdef}, is a graph over the slice  $F \hookrightarrow\overline F\subset \overline M$. In the following calculations we agree on the index ranges $\alpha,\beta,...=1,\cdots,n$, $i,j,...=0,1,\cdots, n$ and use the summation convention over repeated indexes. We start by noticing that,
given a local orthonormal frame $\{\eh_\a\}_{\a=1}^n$ in $F$, we  may extend it in the usual manner to a (local) orthonormal frame $\{\eh_{i}\}_{i=0}^n$ in $F$
by adding
(\ref{framevert}).
The following proposition describes the structure equations associated to such a frame.

\begin{proposition}\label{struct}
If $\overline\nabla$ is the Riemannian connection of $\overline F$  then
\begin{equation}\label{structw}
\overline\nabla_{\eh_\a}\eh_0=0,\quad \overline\nabla_{\eh_0}\eh_\a=\rho^{-1}\rho_\a\eh_0,\quad
\overline\nabla_{\eh_0}\eh_0=-\rho^{-1}\nabla^b \rho,
\end{equation}
where $\nabla^b$ is the gradient operator of $(E,b)$.
\end{proposition}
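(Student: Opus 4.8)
The plan is to compute the three covariant derivatives directly from the warped product structure of $(\overline F,\overline b)$, using the fact that $\eh_0=\rho^{-1}X$ is the unit normal to the totally geodesic slices $F_t$ and that $X=\partial/\partial t$ is Killing. Throughout I work with the splitting $\overline b=b+\rho^2 dt^2$, where $\rho=\rho(r)$ depends only on the base point, so $\rho$ is constant along the $t$-lines.

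First I would establish $\overline\nabla_{\eh_\a}\eh_0=0$. Since $F_t$ is totally geodesic with unit normal $\eh_0$, its shape operator vanishes, which gives the tangential part $(\overline\nabla_{\eh_\a}\eh_0)^T=0$; the normal part $\langle\overline\nabla_{\eh_\a}\eh_0,\eh_0\rangle$ vanishes because $|\eh_0|\equiv 1$. Hence $\overline\nabla_{\eh_\a}\eh_0=0$. Alternatively, one can use $2\langle\overline\nabla_{\eh_\a}\eh_0,\eh_\b\rangle=-\langle(\mathcal L_X\overline b)(\eh_\a,\eh_\b)\rangle\rho^{-1}+(\text{terms in }\eh_\a(\rho))$ and the Killing equation, but the totally-geodesic argument is cleaner.

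Next, for $\overline\nabla_{\eh_0}\eh_\a$: the normal component is $\langle\overline\nabla_{\eh_0}\eh_\a,\eh_0\rangle=-\langle\eh_\a,\overline\nabla_{\eh_0}\eh_0\rangle$, so it is determined by the last formula; the tangential component is $\langle\overline\nabla_{\eh_0}\eh_\a,\eh_\b\rangle$, and using that $\eh_0$ is (up to the factor $\rho^{-1}$) Killing together with $[\,\eh_0,\eh_\a\,]$ having no $\eh_\b$-component, one finds the tangential part vanishes. For $\overline\nabla_{\eh_0}\eh_0$ I would use that integral curves of $X$ are not geodesics precisely because $\rho$ varies: from the Koszul formula, $\langle\overline\nabla_{\eh_0}\eh_0,\eh_\a\rangle=-\tfrac12\eh_\a(\log\rho^2)\cdot$(something), and a short computation gives $\overline\nabla_{\eh_0}\eh_0=-\rho^{-1}\nabla^b\rho$, where $\nabla^b\rho$ is the gradient of $\rho$ on the base $(E\times(r_0,\infty),b)$ — note $\nabla^b\rho$ is horizontal since $\rho$ is a function of $r$ alone. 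Finally, $\langle\overline\nabla_{\eh_0}\eh_\a,\eh_0\rangle=-\langle\eh_\a,-\rho^{-1}\nabla^b\rho\rangle=\rho^{-1}\rho_\a$ with $\rho_\a=\eh_\a(\rho)$, which confirms the middle formula.

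The only mild obstacle is bookkeeping: one must be careful that $\rho$ is a function on the base pulled back to $\overline F$, so $X(\rho)=0$, and that the warping factor in $\overline b=b+\rho^2dt^2$ affects only the $dt^2$ block. Everything else is a routine application of the Koszul formula for the warped product metric (equivalently, the standard formulas for a warped product $B\times_f \mathbb R$ with $f=\rho$), so no serious difficulty arises. I would present the computation of $\overline\nabla_{\eh_0}\eh_0$ in full and deduce the other two identities from it together with the totally-geodesic property of the slices.
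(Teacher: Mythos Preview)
Your proposal is correct and follows essentially the same approach as the paper: the first identity comes from the slices $F_t$ being totally geodesic with unit normal $\eh_0$, and the remaining two are obtained from the torsion-freeness of $\overline\nabla$, the Lie bracket $[\eh_0,\eh_\a]=-\eh_\a(\rho^{-1})\partial_t$, and the Killing property of $X=\partial_t$. The only cosmetic difference is the order: the paper derives $\overline\nabla_{\eh_0}\eh_\a$ directly from $\overline\nabla_{\eh_\a}\eh_0=0$ plus the bracket computation, and then handles $\overline\nabla_{\eh_0}\eh_0$ via the Killing equation $\langle\overline\nabla_{\eh_\a}\partial_t,\partial_t\rangle=\tfrac12\eh_\a(\rho^2)$, whereas you propose to compute $\overline\nabla_{\eh_0}\eh_0$ first (Koszul or warped-product formulas) and read off the middle identity from it; both routes are equivalent and equally short.
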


\begin{proof}
As remarked above, the slices $F_t$ are totally geodesic and this immediately gives the first equation
in (\ref{structw}).
From this we get
\begin{eqnarray*}
\overline\nabla_{\e_0}\e_\a=\overline\nabla_{\e_\a}\e_0+[\e_0,\e_\a] & = & [\rho^{-1}\partial_t,\e_\a]\\
& = &
\rho^{-1}[\partial_t,\e_\a]-\e_\a(\rho^{-1})\partial_t\\
& = & -\e_\a(\rho^{-1})\partial_t,
\end{eqnarray*}
and the second equation follows. Finally,
$$
\overline\nabla_{\e_0}\e_0=\rho^{-2}\overline\nabla_{\partial/\partial t}
\frac{\partial}{\partial_t},
$$
and since $\partial/\partial t$ is Killing, this implies
$$
\langle \overline\nabla_{\e_0}\e_0,\e_0\rangle=\rho^{-3}
\left\langle\overline\nabla_{\partial/\partial t}\frac{\partial}{\partial t},
\frac{\partial t}{\partial t}\right\rangle=0,
$$
so that
$$
\overline\nabla_{\e_0}\e_0=\gamma^\a\e_\a,
$$
with
$$
\gamma^\a=\langle\overline\nabla_{\e_0}\e_0,\e_\a\rangle=-\rho^{-2}
\langle\overline\nabla_{\e_a}\partial_t,\partial_t\rangle=-\frac{\rho^{-2}}{2}
\e_\a\left(\rho^2\right)=-\rho^{-1}\eh_\a(\rho),
$$
as desired.
\end{proof}

Let us now write
$$
E_M=\left\{(x,u(x));x\in F\right\}\subset\overline M,
$$
as the graph associated to a smooth function $u:F\to\mathbb R$ as in Definition \ref{asympdef}.
In terms of the frame in Proposition \ref{struct}, $TE_M$ is spanned by
\begin{equation}\label{frame2}
Z_\a=u_\a\frac{\partial}{\partial t}+\e_\a=\rho u_\a\e_0+\e_\a,\quad \a=1,\cdots,n,
\end{equation}
and we  choose
\begin{equation}\label{normal}
N=\frac{1}{W}\left(\e_{0}-\rho \nabla^b u\right),
\end{equation}
where
\begin{equation}\label{asymw}
W=\sqrt{1+\rho^2|\nabla^b u|^2_b}=1+O(|x|^{-\tau}),
\end{equation}
as the unit normal to $E_M$. Notice that this is consistent with our global choice of unit normal to $M$, which is dictated by the requirement that $N=\eh_{0}$ at infinity.
Also, the induced metric on $E_M$ is
\begin{equation}\label{mett1}
g_{\a\beta}=\delta_{\a\beta}+\rho^2u_\a u_\beta,
\end{equation}
and
its inverse is
\begin{equation}\label{mett2}
g^{\a\beta}=\delta_{\a\beta}-\frac{\rho^2}{W^2}u_\a u_\beta.
\end{equation}

\begin{proposition}\label{data}
The shape operator $B$ of the graph $E_M$  with respect to the frame (\ref{frame2}) is given by
\begin{eqnarray}\label{shape}
W B_{\a\g} & = & \rho u_{\a\g}+\rho_\a u_\g+\rho_\g u_\a+\rho^2u_\a u_\g\langle\nabla^b\rho,\nabla^bu\rangle
                 -\nonumber\\
&  & \, - \frac{\rho^2}{W^2}u_\a u_\b\left(\rho u_{\b\g}+\rho_\b u_\g+\rho_\g u_\b+\rho^2u_\b u_\g\langle\nabla^b\rho,\nabla^bu\rangle\right).
\label{data2}
\end{eqnarray}
\end{proposition}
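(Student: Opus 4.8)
The plan is to compute the shape operator $B$ of the graph directly from its definition $B = -\overline\nabla N$, evaluated on the tangent frame $\{Z_\alpha\}$ from (\ref{frame2}), and then express the result in the basis $\{Z_\alpha\}$ itself, which will produce the matrix $(B_{\alpha\gamma})$ appearing in (\ref{shape}). First I would recall that $B_{\alpha\gamma} = \langle \overline\nabla_{Z_\alpha} Z_\gamma, N\rangle$ (using that $B$ is self-adjoint and that $N$ has unit length, so the normal component of $\overline\nabla_{Z_\alpha} Z_\gamma$ is what enters); this is the standard Gauss-formula identity and avoids differentiating $N$ altogether. So the core computation is to evaluate $\langle \overline\nabla_{Z_\alpha} Z_\gamma, N\rangle$ with $Z_\alpha = \rho u_\alpha \eh_0 + \eh_\alpha$ and $N = W^{-1}(\eh_0 - \rho\nabla^b u)$.

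The key steps, in order: (1) expand $\overline\nabla_{Z_\alpha} Z_\gamma$ bilinearly, getting four groups of terms — $\overline\nabla_{\eh_\alpha}\eh_\gamma$, the terms where the derivative hits the coefficient $\rho u_\gamma$ (producing $\eh_\alpha(\rho u_\gamma)\eh_0 = (\rho_\alpha u_\gamma + \rho u_{\alpha\gamma})\eh_0$ plus lower-order pieces from $\eh_\alpha(\rho u_\gamma)$ when $\eh_\alpha$ has an $\eh_0$-component, but note $Z_\alpha$ acting on functions on $F$ only sees the horizontal part since $u$ is pulled back from $F$), and the mixed terms involving $\overline\nabla_{\eh_\alpha}\eh_0$, $\overline\nabla_{\eh_0}\eh_\gamma$, $\overline\nabla_{\eh_0}\eh_0$; (2) substitute the structure equations from Proposition~\ref{struct}, namely $\overline\nabla_{\eh_\alpha}\eh_0 = 0$, $\overline\nabla_{\eh_0}\eh_\gamma = \rho^{-1}\rho_\gamma \eh_0$, $\overline\nabla_{\eh_0}\eh_0 = -\rho^{-1}\nabla^b\rho$, together with the known Levi-Civita connection of $(F,b)$ (since the slices are totally geodesic, $\overline\nabla_{\eh_\alpha}\eh_\gamma$ restricted to $F$ is just $\nabla^b_{\eh_\alpha}\eh_\gamma$); (3) take the inner product with $N = W^{-1}(\eh_0 - \rho\nabla^b u)$, collecting the $\eh_0$-component with coefficient $W^{-1}$ and the horizontal component paired against $-W^{-1}\rho\,\nabla^b u$; this yields $W B_{\alpha\gamma}^{(0)} = \rho u_{\alpha\gamma} + \rho_\alpha u_\gamma + \rho_\gamma u_\alpha + \rho^2 u_\alpha u_\gamma \langle\nabla^b\rho,\nabla^b u\rangle$, the first line of (\ref{shape}); (4) observe that $\{Z_\alpha\}$ is not orthonormal — the quantity just computed is $\langle \overline\nabla_{Z_\alpha}Z_\gamma, N\rangle$, i.e. the second fundamental form in the $Z$-frame, but to get $B$ as an endomorphism one must raise an index with $g^{\alpha\beta}$ from (\ref{mett2}); expanding $g^{\alpha\beta} = \delta_{\alpha\beta} - \rho^2 W^{-2} u_\alpha u_\beta$ produces exactly the subtracted second line of (\ref{data2}), with the $\rho^2/W^2$ factor and the contracted index $\beta$.

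The main obstacle — really the only subtle point — is bookkeeping in step (1)–(2): making sure that when $Z_\alpha$ differentiates the coefficient functions one correctly accounts for the fact that $\eh_0$ is vertical while $u, \rho$ are pulled back from $F$, so that $Z_\alpha(f) = \eh_\alpha(f)$ for such $f$, and keeping straight which terms land in the $\eh_0$-direction versus the horizontal directions after contracting with $N$. A secondary care point is the distinction between the bilinear form $\langle \overline\nabla_{Z_\alpha}Z_\gamma, N\rangle$ and the operator $B_{\alpha\gamma}$ in (\ref{shape}): the statement as written already has the $g^{\alpha\beta}$-raising folded in, so one should present the computation as first obtaining the symmetric form and then applying (\ref{mett2}). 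Everything else is a routine substitution of Proposition~\ref{struct} and (\ref{normal})–(\ref{mett2}); no estimates or limiting arguments are needed here, since this proposition is a pointwise identity on the graph.
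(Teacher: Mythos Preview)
Your proposal is correct and follows essentially the same route as the paper: compute the second fundamental form $S_{\beta\gamma}=\langle\overline\nabla_{Z_\beta}Z_\gamma,N\rangle$ by expanding $\overline\nabla_{Z_\beta}Z_\gamma$ via Proposition~\ref{struct}, then raise an index with $g^{\alpha\beta}$ from (\ref{mett2}) to obtain $B_{\alpha\gamma}$. The one point you leave slightly implicit is how to dispose of the horizontal term $\overline\nabla_{\eh_\alpha}\eh_\gamma=\nabla^b_{\eh_\alpha}\eh_\gamma$; the paper handles this by working at a point where the frame satisfies $\nabla^b_{\eh_\alpha}\eh_\gamma=0$, which is the cleanest way to make your step~(3) go through without extra connection terms.
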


\begin{proof}
We start by computing the coefficients
$$
S_{\b\g}=\langle \overline\nabla_{Z_\b}Z_\g,N\rangle
$$
of the second fundamental form $S$ of $E_M$. Since $\overline\nabla_{\e_b}\e_0=0$ a direct computation gives
$$
\overline\nabla_{Z_\b}Z_\g=\rho u_\b\e_0(\rho u_\g)\e_0+\rho^2u_\b u_\g\overline\nabla_{\e_0}\e_0+
\rho u_\b\overline\nabla_{\e_0}\e_\g+
\e_\b(\rho u_\g)\e_0+\overline\nabla_{\e_\b}\e_\g.
$$
But notice that $\e_0(\rho u_\g)=\rho^{-1}\partial_t(\rho u_\g)=0$. Moreover, the fact that the slices are totally geodesic implies
$
\overline\nabla_{\e_\b}\e_\g=\nabla^b_{\e_\b}\e_\g,
$
and since we may assume that $\nabla^b_{\e_j}\e_k=0$ at the point where we are doing the computation, it follows that $\overline\nabla_{\e_\b}\e_\g=0$. Thus,
$$
\overline\nabla_{Z_\b}Z_\g=\rho^2u_\b u_\g\overline\nabla_{\e_0}\e_0+
\rho u_\b\overline\nabla_{\e_0}\e_\g+
\e_\b(\rho u_\g)\e_0,
$$
so that Proposition \ref{struct} and (\ref{normal})
easily give
$$
S_{\b\g}=\frac{1}{W}\left(\rho u_{\b\g}+\rho_\b u_\g+\rho_\g u_\b+\rho^2u_\b u_\g\langle \nabla^b \rho,\nabla^b u\rangle\right).
$$
The expression (\ref{data2}) for the shape operator $B_{a\g}=g^{\a\b}S_{\b\g}$  follows readily.
\end{proof}

The following proposition is a key ingredient in our approach to the mass of AH hypersurfaces, as  it shows that the specific combination of extrinsic data yielding the Newton tensor of a graph simplifies considerably after evaluation on the tangential
component of the vertical Killing field.

\begin{proposition}\label{key2}
If $F_M$ is as above then the coefficients of $G{X}^T$ with respect to the frame (\ref{frame2}) are given by
\begin{equation}\label{endup2}
(GX^T)_\a=\frac{\rho^3}{W^3}\left(u_{\b\b}u_\a-u_{\a\b}u_\b\right)
+\frac{\rho^2}{W^3}
\left(\rho_\b u_\a u_\b-\rho_\a u_\b u_\b\right).
\end{equation}
In particular, $GX^T=O(r^{-\tau+1})$.
\end{proposition}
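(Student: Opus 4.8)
The plan is to first compute the tangential part $X^T$ of the vertical Killing field explicitly in the graph frame (\ref{frame2}), and then feed it into the Newton tensor $G=S_1I-B$ via Proposition \ref{data}; the mechanism is that the many $W^{-2}u_\alpha u_\beta$ correction terms carried by (\ref{mett2}) and by the formula for $B$ all cancel once one contracts against $X^T$. For the first step I would use $X=\rho\,\eh_0$ (from (\ref{framevert})) and $\eh_0\perp\eh_\alpha$, so that (\ref{normal}) gives the angle function $\langle X,N\rangle=\rho/W$ at once, whence
\[
X^T=X-\langle X,N\rangle N=\rho\Bigl(1-\tfrac1{W^2}\Bigr)\eh_0+\tfrac{\rho^2}{W^2}\,\nabla^b u .
\]
Using $W^2-1=\rho^2|\nabla^b u|^2_b$ from (\ref{asymw}) and $\eh_\beta=Z_\beta-\rho u_\beta\eh_0$ from (\ref{frame2}), the $\eh_0$-components cancel and there remains the clean identity $X^T=\tfrac{\rho^2}{W^2}\,u_\beta Z_\beta$ (equivalently, $\langle X,Z_\gamma\rangle=\rho^2u_\gamma$, and one inverts the system with $g_{\alpha\beta}$ as in (\ref{mett1})). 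Hence $GX^T=\tfrac{\rho^2}{W^2}u_\beta\,GZ_\beta$, and writing $B^\alpha{}_\beta=g^{\alpha\gamma}S_{\gamma\beta}$ for the shape operator in the frame (\ref{frame2}) this becomes $(GX^T)_\alpha=\tfrac{\rho^2}{W^2}\bigl(S_1u_\alpha-B^\alpha{}_\beta u_\beta\bigr)$.

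For the second step, set $T_{\alpha\beta}=\rho u_{\alpha\beta}+\rho_\alpha u_\beta+\rho_\beta u_\alpha+\rho^2u_\alpha u_\beta\langle\nabla^b\rho,\nabla^b u\rangle$, so that Proposition \ref{data} says exactly $S_{\alpha\beta}=W^{-1}T_{\alpha\beta}$ and $B^\alpha{}_\beta=W^{-1}g^{\alpha\gamma}T_{\gamma\beta}$. Contracting with $g^{\alpha\gamma}=\delta_{\alpha\gamma}-\tfrac{\rho^2}{W^2}u_\alpha u_\gamma$ from (\ref{mett2}), one finds that both $S_1=W^{-1}\bigl(T_{\beta\beta}-\tfrac{\rho^2}{W^2}u_\gamma u_\delta T_{\gamma\delta}\bigr)$ and $B^\alpha{}_\beta u_\beta=W^{-1}\bigl(T_{\alpha\beta}u_\beta-\tfrac{\rho^2}{W^2}u_\alpha\,u_\gamma u_\delta T_{\gamma\delta}\bigr)$ contain the same fully contracted scalar $u_\gamma u_\delta T_{\gamma\delta}$, which therefore drops out of $S_1u_\alpha-B^\alpha{}_\beta u_\beta$ and leaves $(GX^T)_\alpha=\tfrac{\rho^2}{W^3}\bigl(T_{\beta\beta}u_\alpha-T_{\alpha\beta}u_\beta\bigr)$. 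Expanding $T$ once more yields further cancellations --- the two $\rho^2|\nabla^b u|^2_b\langle\nabla^b\rho,\nabla^b u\rangle u_\alpha$ pieces kill each other and the two $\langle\nabla^b\rho,\nabla^b u\rangle u_\alpha$ pieces merge into one --- so that $T_{\beta\beta}u_\alpha-T_{\alpha\beta}u_\beta=\rho(u_{\beta\beta}u_\alpha-u_{\alpha\beta}u_\beta)+(\rho_\beta u_\alpha u_\beta-\rho_\alpha u_\beta u_\beta)$, which is precisely (\ref{endup2}).

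For the decay statement I would use (\ref{chart2}) together with $\rho\sim r$ and $|\nabla^b\rho|_b=r$, so that both $\rho$ and the $\rho_\alpha$ are $O(r)$. The bound $|\rho u_\alpha|=O(r^{-\tau/2})$ yields $u_\alpha=O(r^{-\tau/2-1})$, and then $|\rho_\beta u_\alpha+\rho u_{\alpha\beta}|=O(r^{-\tau/2})$ yields $u_{\alpha\beta}=O(r^{-\tau/2-1})$ too. Since $W^{-3}\le1$, every term on the right of (\ref{endup2}) is a product of $O(r^3)$ (absorbing the extra factor $\rho_\alpha=O(r)$ into the $\rho^2$ in the last two terms) with $O(r^{-\tau-2})$, hence $O(r^{-\tau+1})$; and since the frame $\{Z_\alpha\}$ has uniformly bounded length, the same order bound holds for $GX^T$ itself.

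The only genuine difficulty is the chain of cancellations in the second step: a priori $GX^T$ inherits all the $W^{-2}u_\alpha u_\beta$ corrections present in both $g^{\alpha\beta}$ and $B^\alpha{}_\beta$, and it is not evident that contracting against the specific vector $u_\beta Z_\beta$ --- which is what $X^T$ happens to equal --- annihilates them. Keeping careful track of which index of $T$ is paired with $u_\beta$ at each stage is exactly what makes the expression collapse to (\ref{endup2}); this is the simplification advertised immediately before the statement.
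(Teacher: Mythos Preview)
Your argument is correct and follows essentially the same route as the paper: both compute $X^T_\alpha=\rho^2 u_\alpha/W^2$, feed it into $(GX^T)_\alpha=S_1X^T_\alpha-B_{\alpha\beta}X^T_\beta$, and then exploit the same cancellation---the paper by splitting $B$ into eight summands and observing $B^{(s)}_{\beta\beta}X^T_\alpha=B^{(s)}_{\alpha\beta}X^T_\beta$ for $s\ge 4$, you by isolating the common scalar $u_\gamma u_\delta T_{\gamma\delta}$ coming from the $g^{-1}$ correction. Your packaging via $T_{\alpha\beta}$ is a clean way to see the cancellation, but the content is the same.
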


\begin{proof}
We have
\begin{equation}\label{tt}
(GX^T)_\a=B_{\b\b}X^T_\a-B_{\a\b}X^T_\b,
\end{equation}
where
\begin{equation}\label{exp2}
X^T=X_{a}^TZ_\a=X^T_\a \oveh_\a+X_\a^T\rho f_\a\eh_0.
\end{equation}
We rewrite (\ref{data2}) as
$$
B_{\a\b} = \sum_{s=1}^8B_{\a\b}^{(s)},
$$
where
$WB_{\a\b}^{(1)}=\rho u_{\a\b}$, $WB_{\a\b}^{(2)}=\rho_\a u_\gamma$, etc.
To proceed further notice that, since $\langle X,N\rangle=\rho/W$,
$$
X^T=X-\frac{\rho}{W}N=\frac{\rho^3 |\nabla^b u|^2}{W^2}\e_0+\frac{\rho^2}{W^2}u_\a\eh_\a,
$$
and comparing with (\ref{exp2}) we obtain
\begin{equation}\label{new2}
X^T_\a=\frac{\rho^2}{W^2}{f_\a}.
\end{equation}
This yields a remarkable cancelation in (\ref{tt}) since  $B_{\b\b}^{(s)}X^T_\a=B^{(s)}_{\a\b}X^T_\b$ for $s\geq 4$. Finally, the last assertion follows from (\ref{chart2}), (\ref{asymw}) and the fact that $\rho=O(r)$ at infinity.
\end{proof}

\section{The proofs of Theorems \ref{another2}, \ref{another3} and \ref{penroptimal0}}\label{penroptimal}

In this section we prove our main results presented in Subsection \ref{descr}. As remarked in the Introduction, the starting point is the flux-type formula
\begin{equation}\label{basic}
{\rm div}_gG\overline X^T=2S_2\Theta_{\overline X} + {\rm Ric}_{\overline g}(N,\overline X^T),
\end{equation}
where $(M,g)\looparrowright (\overline M,\overline g)$ is a two-sided asymptotically flat hypersurface as in Definition \ref{asympdef}, $G$ is its Newton tensor and $\overline X^T$ is the tangential component of the Killing field $\overline X$ that agrees  with $X=\partial/\partial t$ on $\overline M-\overline C$.
In this generality, (\ref{basic}) has been first obtained in \cite{ABC} in the Lorentzian setting. The Riemannian version can be found in \cite{AdLM}.

We start with Theorem \ref{another2}. For $r_0<r< +\infty$ we consider $F^{(r)}=E\times (r,+\infty)$ so that
$E_r=\partial F^{(r)}$. As usual we denote by $\nu_r$ the unit normal to $E_r$ pointing toward infinity. If $M_r=M-u(F^r)$ we obtain, after integrating (\ref{basic}) over $M$ and using the divergence theorem,
\begin{eqnarray*}
\int_M \left(2S_2\Theta_{\overline X} + {\rm Ric}_{\overline g}(N,\overline X^T)\right) dM & = & \lim_{r\to +\infty}\int_{M_r}{\rm div}_gG\overline X^T dM \\
& = & \lim_{r\to +\infty}\int_{\partial M_r}\langle GX^T,\vartheta_r\rangle\, d\partial M_r \\
& = &  \lim_{r\to +\infty}\int_{E_r} \langle GX^T,\nu\rangle dE_r,
\end{eqnarray*}
where we have used that at infinity we may replace $\vartheta d\partial M_r$
by $\nu dE_r$.

By (\ref{mett1}), (\ref{chart2}) and Proposition \ref{key2} we have
\begin{eqnarray*}
\langle GX^T,\nu\rangle¨& = & g_{\alpha\mu}(GX^T)_\a\nu_\mu \\
  & = & (GX^T)_\a\nu_\a +\rho^2 u_\a u_\mu (GX^T)_\a\nu_\mu \\
  & = & (GX^T)_\a\nu_\a + O(r^{-2\tau+1}),
\end{eqnarray*}
and, given that the $(n-1)$-area of $E_r$ is $O(r^{n-1})$, we get
\begin{eqnarray*}
\int_M \left(2S_2\Theta_{\overline X} + {\rm Ric}_{\overline g}(N,\overline X^T)\right) dM & = & \int_{E_r}(GX^T)_\a\nu_\a dE_r +\lim_{r\to +\infty} O(r^{-2\tau+n}) \\
& = & \int_{E_r}(GX^T)_\a\nu_\a dE_r,
\end{eqnarray*}
since $\tau>n/2$. Now, a straightforward computation using (\ref{recipe2}) with
$$
e_{\a\b}=g_{\a\b}-b_{\a\b}=g_{\a\b}-\delta_{\a\b}=\rho^2u_\a u_\b
$$
gives $J(\rho)_\a=W^{-3}(GX^T)_\a$ and, in view of (\ref{asymw}), this concludes the proof of Theorem \ref{another2}.

As already remarked, Theorem \ref{another3} is an immediate consequence of Theorem \ref{another2}, so we pass to the proof of Theorem \ref{penroptimal0}. Thus, as in Example \ref{subtle}, we consider
the metric (\ref{metric}) in $\mathbb H^{n+1}=\mathbb H^n\times \mathbb R$. Here, it is convenient to
choose the Poincar\'e disk model for $\mathbb H^n$ as in Remark \ref{expli}.
Now, Remark \ref{ortho} and a well-known positivity result (Theorem 4.7 in \cite{CH}) imply that $P$ in (\ref{admi}) is timelike future direct. Thus, we can use  (\ref{faju2}) and recalling that ${\rm Ric}_{\overline b}=-(n-1)\overline b$, the computation leading to (\ref{form1}) now gives an extra boundary term, namely,
$$
\mathfrak m_{(M,g)}=c_n\int_M\Theta_{\overline X}\mathfrak R_gdM-
c_n\int_\Gamma\langle G\overline X^T,\eta\rangle d\Gamma,
$$
where $\mathfrak R_g=R_g+n(n-1)$, $\overline X= \rho \eh_0$ and $\eta$ is the outward unit co-normal along $\Gamma$. Since $M$ is a graph and $\mathfrak R_g\geq 0$, we get
\begin{equation}\label{quasi}
\mathfrak m_{(M,g)}\geq
-c_n\int_\Gamma\langle G\overline X^T,\eta\rangle d\Gamma,
\end{equation}
and we are left with the task of handling the integral. To this effect we use our orthogonality assumption to expand, in terms of a local orthonormal basis
$\{\tilde\eh_l \}_{l=1}^{n-1}$ of $T\Gamma$,
$$
\overline X^T  = \langle \overline X,\eta\rangle\eta+\sum_l\langle \overline X,\tilde \eh_l\rangle\tilde \eh_l,
$$
so that
\begin{eqnarray*}
\langle G\overline X^T,\eta\rangle &  = &  \langle \overline X,\eta\rangle\langle G\eta,\eta\rangle+\sum_l\langle \overline X,\tilde \eh_l\rangle\langle G\tilde \eh_l,\eta\rangle\\
  & = & \langle \overline X,\eta\rangle(S_1-\langle B\eta,\eta\rangle)+\sum_l\langle \overline X,\tilde \eh_l\rangle\langle G\tilde \eh_l,\eta\rangle.
\end{eqnarray*}
But
$$
\langle G\tilde\eh_l,\eta \rangle=-\langle B\tilde\eh_l,\eta\rangle=\langle \overline\nabla_{\tilde \eh_l}N,\eta\rangle= -\langle N,\overline\nabla_{\tilde \eh_l}\eta\rangle,
$$
and this vanishes due to the assumption that, along $\Gamma$, the unit normal $\xi$ to
the totally umbilic horosphere $\Hc$
equals $\pm\eta$. Moreover, this computation shows that $\langle B\eta,\tilde\eh_l\rangle=0$, which implies that $\eta$ is a principal direction of $B$ with $\langle B\eta,\eta\rangle$  being the corresponding principal curvature. Thus, $S_1-\langle B\eta,\eta\rangle=s_1(\Gamma)$, the mean curvature of $\Gamma\subset \Hc$ with respect to $N$.

To proceed further we first note that, from (\ref{balhor}) and (\ref{normal}), the unit normal to the horosphere $\Hc_{d,\pm}$ is
$$
\xi_{d,\pm}=\frac{1}{\Wc}\left(\eh_0\mp\nabla^b\rho\right),\qquad \Wc=
\sqrt{1+|\nabla^b\rho|_b^2},
$$
so that $\langle \eh_0,\xi\rangle=1/\Wc$.
Hence, if we first consider the case $\Hc=\Hc_{d,+}$ and $\eta=\xi=\xi_{d,+}$, so that
$N$ points {\em outward} $\Gamma$,  we end up with
\begin{equation}\label{really0}
-\int_\Gamma\langle G\overline X^T,\eta\rangle d\Gamma=
\int_\Gamma\frac{\rho}{\Wc}  S_1(\Gamma) d\Gamma,
\end{equation}
where $S_1(\Gamma)=-s_1(\Gamma)$ is the mean curvature of $\Gamma\subset \Hc$ with respect to its inward unit normal, namely, $-N$.
On the other hand, if we choose
$$
\eh_{\alpha}=\frac{1-|x|^2}{2}\frac{\partial}{\partial x_\alpha}
$$
as our orthonormal frame in $\mathbb H^n$, a direct computation gives $\eh_\alpha(\rho)=\rho^{(\alpha)}$, so that (\ref{relation}) can be rewritten as
\begin{equation}\label{relation2}
|\nabla^b\rho|^2=\rho^2-1.
\end{equation}
Thus, $\Wc=\rho$ and we conclude that
\begin{equation}\label{final}
\mathfrak m_{(M,g)}\geq c_n\int_\Gamma S_1(\Gamma)d\Gamma.
\end{equation}
The same argument also leads to (\ref{final}) in the remaining cases. For example, if $\Hc=\Hc_{d,+}$ and $\eta=-\xi_{d,+}$ then $N$ now points {\em inward} $\Gamma$ and $s_1(\Gamma)=S_1(\Gamma)$. Thus, in any case we can use that
the intrinsic geometry of a horosphere is Euclidean and apply the well-known Alexandrov-Fenchel inequality in order to estimate from below the boundary integral in the usual manner. This completes the proof of Theorem \ref{penroptimal0}.

\begin{remark}\label{last}
{\rm
The argument above actually gives a general mass formula for a balanced asymptotically flat hypersurface $M\subset \mathbb H^{n+1}$ with an inner boundary $\Gamma$ lying in a balanced horosphere $\Hc$ and with the property that $M$ meets $\Hc$ orthogonally along $\Gamma$, namely,
\begin{equation}\label{last2}
\mathfrak m_{(M,g)}=c_n\int_M\Theta_{\overline X}\mathfrak R_g dM+
c_n\int_\Gamma S_1(\Gamma)d\Gamma.
\end{equation}
In particular, Theorem \ref{penroptimal0} holds more generally if $M$ is assumed to be a {\em quasi-graph} in the sense that $\Theta_{\overline X}$ does not change sign.
}
\end{remark}

\begin{remark}\label{really}
{\rm In the spirit of the previous remark, we can also consider the case in which $\Gamma$ lies in a hypersurface $\Kc$ which is a graph associated to a {\em constant} function. Such a hypersurface is totally geodesic in $\mathbb H^{n+1}$ (a copy of $\mathbb H^n$) and  we deduce that $\Gamma\subset M$ is minimal under the orthogonality assumption; this is of course the case treated in \cite{DGS}. If
$\xi $ is the unit normal to $\Kc$, using again (\ref{normal}) we see that $\langle\eh_0,\xi\rangle=1$ and this gives
\begin{equation}\label{new22}
\mathfrak m_{(M,g)}=c_n\int_M\Theta_{\overline X}\mathfrak R_g dM+
c_n\int_\Gamma \rho S_1(\Gamma)d\Gamma.
\end{equation}
Thus, if $\Gamma\subset \Kc$ is mean convex, $M$ is a { quasi-graph} and $\mathfrak R_g\geq 0$ outside of the zero of $\Theta_{\overline X}$ we obtain
\begin{equation}\label{new22}
\mathfrak m_{(M,g)}\geq
c_n\int_\Gamma \rho S_1(\Gamma)d\Gamma.
\end{equation}
This estimate has been obtained in \cite{DGS} for graphs and there it is their  starting point in establishing  an array of Penrose type inequalities. Thus, we see that the results in \cite{DGS} hold under this slightly more general situation.
}
\end{remark}

\begin{remark}\label{finallyy}
{\rm To illustrate the flexibility of our method we consider the case in which $\Gamma$ lies in another class $\Sigma_c$, $c\in \mathbb R$, of \lq balanced\rq\, hyperfurces. In the $\mathbb H^n\times \mathbb R$ model, these are given as graphs associated to
$$
w_c(x)=\frac{c}{\rho(x)}, \qquad x\in\mathbb H^n.
$$
Geometrically, they are the equidistant hypersurfaces to the totally geodesic hypersurface $\Sigma_0=\mathbb H^n$. Proceeding as above, and assuming whenever needed that the mass vector $P$ in (\ref{admi}) is timelime future direct, we will eventually get
$$
\mathfrak m_{(M,g)}=c_n\int_M\Theta_{\overline X}\mathfrak R_g dM+
c_n\int_\Gamma \psi_c(\rho)S_1(\Gamma)d\Gamma,
$$
where
$$
\psi_c(\rho)=\frac{\rho}{\sqrt{1+ c^2(1-\rho^{-2})}}.
$$
Since the intrinsic geometry of $\Sigma_c$ is hyperbolic,  this can be explored just as in \cite{DGS} (via Hoffman-Spruck, Minkowski, etc.) to yield Penrose type inequalities for this kind of horizon.
}
\end{remark}

\end{document}